\documentclass[11pt]{article}
\usepackage[a4paper,margin=1in]{geometry}
\usepackage{iftex}
\usepackage[T1]{fontenc}
\usepackage[utf8]{inputenc}
\usepackage{lmodern}
\usepackage{amsmath,amssymb,amsthm,mathtools,mathrsfs}
\usepackage{microtype}
\usepackage[numbers,square,sort&compress]{natbib}
\usepackage[unicode,hidelinks]{hyperref}
\usepackage{bookmark}
\newtheorem{theorem}{Theorem}[section]
\newtheorem{proposition}[theorem]{Proposition}
\newtheorem{lemma}[theorem]{Lemma}

\theoremstyle{remark}

\numberwithin{equation}{section}
\author{Yuhang Cao\thanks{Yuhang Cao, Hexiang Huang, Mengrui Yang and Sihuang Hu are with the State Key Laboratory of Cryptography and Digital Economy Security, the Key Laboratory of Cryptologic Technology and Information Security, Ministry of Education, and the School of Cyber Science and Technology, Shandong University, Qingdao, Shandong 266237, China. (email: yuhangcao@mail.sdu.edu.cn, hexianghuang@foxmail.com, 19553710279@163.com, husihuang@sdu.edu.cn).} \and Sihuang Hu \and Hexiang Huang \and Mengrui Yang}
\date{}

\title{The Norm Spectrum of Barnes--Wall Lattices Below Twice the Minimum}

\begin{document}

\maketitle

\begin{abstract}

Let \(\Lambda_m\) be the Barnes--Wall lattice of dimension \(2^m\), normalized to have minimum squared norm \(d_m:=2^{m-1}\). We determine its complete norm spectrum strictly between \(d_m\) and \(2d_m\), proving that, for every integer \(m\ge1\),
\[
\bigl\{\|x\|^2:x\in\Lambda_m,\ d_m<\|x\|^2<2d_m\bigr\}
=
\bigl\{2d_m-2^j:j\in\mathbb Z,\ \lceil m/2\rceil\le j\le m-2\bigr\}.
\]
In particular, the norm spectrum in this interval is nonempty for \(m\ge4\). The proof uses a Gaussian-integer model in which lattice automorphisms arising from Clifford unitaries transform a vector into a pair of coordinate subvectors in a lower-dimensional lattice. Divisibility of the differences of their squared norms and a matrix Parseval identity provide the arithmetic and averaging constraints needed for induction on \(m\). Every listed norm is realized explicitly by a sum of two minimal vectors of the lattice.

\end{abstract}

\section{Introduction}

The Barnes--Wall lattices form one of the best studied families of highly symmetric Euclidean lattices.
Originally introduced by \citet{BarnesWall1959}, they have found applications across several areas, including communication coding and decoding \citep{Forney1988PartII,CorlayEtAl2020,GrigorescuPeikert2014}, geometry and combinatorial design theory \citep{Bachoc2005}, and quantum information \citep{KliuchnikovSchoenenbeck2024,KalraSinha2026}.
Their automorphism groups and connections with Clifford groups were studied by \citet{Wall1962IV} and \citet{NebeRainsSloane2001}.
Barnes and Wall also determined their minimum norms and gave an explicit characterization and enumeration of their minimal vectors \citep{BarnesWall1959}.

We study the norms strictly between the minimum and twice the minimum of a Barnes--Wall lattice.
The construction of these lattices from nested binary Reed--Muller codes via Construction~D \citep[Section~IV.B]{Forney1988PartII} suggests a close connection between the short norm spectrum of the lattices and the low-weight structure of these codes.
\citet{BerlekampSloane1969} showed that, for a binary Reed--Muller code of minimum distance \(\delta\), every nonzero weight below \(2\delta\) has the form \(2\delta-2^j\) for an integer \(j\).
\citet{KasamiTokura1970} subsequently classified the codewords in this range and determined the corresponding weight enumerators.
As reported by \citet{Nebe2026}, Keller conjectured an analogous restriction for Barnes--Wall lattices, motivated by this pattern, computational evidence, and the known theta series in dimensions \(64\) and \(128\).

We write \(\Lambda_m\) for the Barnes--Wall lattice of dimension \(2^m\), normalized so that \(d_m:=\min(\Lambda_m)=2^{m-1}\).
Here and throughout, the norm of a vector means its squared Euclidean length.
Our main result determines the complete norm spectrum of \(\Lambda_m\) in the open interval \((d_m,2d_m)\).

\begin{theorem}\label{thm:main-spectrum}
For every integer \(m\ge1\),
\begin{equation}\label{eq:main-spectrum}
\bigl\{\lVert x\rVert^2:x\in\Lambda_m,\ d_m<\lVert x\rVert^2<2d_m\bigr\}
=
\bigl\{2d_m-2^j:j\in\mathbb Z,\ \lceil m/2\rceil\le j\le m-2\bigr\}.
\end{equation}
In particular, this spectrum is empty for \(m\le3\), while every displayed value occurs for \(m\ge4\).
\end{theorem}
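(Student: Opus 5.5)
Both inclusions of \eqref{eq:main-spectrum} will be handled separately, working throughout in the Gaussian-integer model. We realize \(\Lambda_m\) as the \(\mathbb Z[i]\)-lattice in \(\mathbb Z[i]^{2^{m-1}}\) obtained recursively by \(\Lambda_m=\{(u,u+(1+i)v):u,v\in\Lambda_{m-1}\}\) (suitably rescaled), with its automorphism group containing the complex Clifford group. For the inclusion ``\(\supseteq\)'', fix \(j\) with \(\lceil m/2\rceil\le j\le m-2\). We take two minimal vectors \(x,y\in\Lambda_m\), for instance lifts of minimum-weight Reed--Muller codewords (scaled characteristic vectors of affine subspaces of codimension \(\sim m/2\) in \(\mathbb F_2^m\)). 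We choose \(x\) and \(y\) so that \(\langle x,y\rangle=-2^{j-1}\); this gives \(\|x+y\|^2=2d_m-2^j\). A concrete choice is two minimal vectors supported on affine flats whose intersection has the prescribed size, with signs arranged to be negative on the overlap. The bound \(j\ge\lceil m/2\rceil\) is exactly what allows the overlap while keeping both vectors minimal. This part is a routine verification once the description of minimal vectors from \citep{BarnesWall1959} is used.

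For ``\(\subseteq\)'' we induct on \(m\), with small \(m\) (\(m\le 3\)) checked directly or via theta series. Let \(x\in\Lambda_m\) with \(d_m<\|x\|^2<2d_m\). Writing \(x=(u,w)\) with \(u,w\) in a rescaled copy of \(\Lambda_{m-1}\), we have \(\|x\|^2=\|u\|^2+\|w\|^2\). The plan is to apply a Clifford automorphism \(g\), chosen among the transforms acting as \((u,w)\mapsto\bigl((u+w)/(1+i),(u-w)/(1+i)\bigr)\) composed with diagonal phases, so that the two halves of \(gx\) become well balanced. We then establish two constraints on the pair \((u,w)\) after the transformation. The first is \emph{arithmetic}: \(\|u\|^2-\|w\|^2\) is divisible by a power of \(2\), namely \(2^{\lceil m/2\rceil}\) or comparable, which is obtained from the \(\mathbb Z[i]\)-structure, since \(\langle u,w\rangle\in(1+i)^k\mathbb Z[i]\). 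The second is \emph{averaging}: a Parseval identity over the family of transforms \(g\) (a Hadamard-type unitary on the two blocks, or more generally across \(2^k\) blocks) shows that the average of \(\|\text{first half of }gx\|^2\) equals \(\|x\|^2/2\). Hence some \(g\) makes one half have norm at most \(\|x\|^2/2<d_m=2d_{m-1}\).

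With these constraints, we split into cases. If some half is zero, the other half lies in \(\Lambda_{m-1}\) rescaled, and its norm is forced into the inductive spectrum shifted by a factor \(2\). This gives \(2(2d_{m-1}-2^{j'})=2d_m-2^{j'+1}\) with \(j'+1\) in the allowed range, or a norm of \(2d_m\) or more. Otherwise both halves are nonzero, so each has norm at least \(d_{m-1}\). The half with norm below \(2d_{m-1}\) is then either minimal or, by induction, of the form \(2d_{m-1}-2^{j'}\). The divisibility of the norm difference then pins down the other half and forces \(\|x\|^2=2d_m-2^j\). The lower bound \(j\ge\lceil m/2\rceil\) comes from the divisibility exponent, and the upper bound \(j\le m-2\) from the fact that \(d_m+2^{m-1}\cdot\tfrac12\) is the smallest admissible value above \(d_m\).

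The main obstacle will be the combination step. We must show that the divisibility exponent for \(\|u\|^2-\|w\|^2\) is strong enough, uniformly over the chosen Clifford transforms, to exclude every value not of the form \(2d_m-2^j\). This is where the matrix Parseval identity must be combined carefully, and likely iterated over several levels of the recursion rather than a single split, so that the arithmetic progression of allowed norms narrows to powers of two.
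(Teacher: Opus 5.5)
\textbf{Verdict: there is a genuine gap in the exclusion argument.} Your construction for ``$\supseteq$'' is essentially the paper's. The paper takes $p=x_U$ with $\operatorname{codim}U=1$ and $q=-2^{a-1}x_V$ with $\operatorname{codim}V=2a-1$, where $a=m-j$, meeting transversally, so $(p,q)=-2^{j-1}$. What matters is that both codimensions are odd, so the vectors are minimal, and that they sum to $2(m-j)\le m$.

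The gap sits exactly at the step you flag as the main obstacle. First, your averaging step is vacuous. For any split $x=(u,w)$, one half automatically has norm at most $\|x\|^2/2$, so averaging the norm of the first half over transforms gives nothing new. Second, the claim that divisibility of $\|u\|^2-\|w\|^2$ ``pins down the other half'' is false. Knowing one half's norm together with $\gamma_m\mid\|u\|^2-\|w\|^2$ only recovers $\|x\|^2\in\gamma_m\mathbb Z$, an arithmetic progression, not the powers-of-two pattern. A concrete case is $m=7$, where $d_7=64$ and $\gamma_7=16$. The value $80$ is divisible by $16$ but must be excluded. Yet the split $(32,48)$, with halves in a copy of $\Lambda_6$, satisfies every constraint in your plan: $32=d_6$ is minimal, $48=2d_6-2^4$ lies in the inductive spectrum, and $48-32=16$. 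So the argument as proposed cannot exclude $80$. (Separately, the bound $j\le m-2$ is automatic from $\|x\|^2>d_m$ once the form $2d_m-2^j$ is known. Appealing to $3d_m/2$ being the next norm presupposes Nebe's theorem.)

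The missing ingredient is a global, quadratic constraint over a much larger family of splits.
\begin{itemize}
\item The paper shows that every nonidentity $R\in\mathscr E_{m-1}$ is conjugated to $Z\otimes I^{\otimes(m-2)}$ by a Clifford unitary that is a lattice automorphism.
\item Hence $q_R(x)=\langle x,Rx\rangle$ is the imbalance $\|u_R\|^2-\|v_R\|^2$ of a genuine split, and so lies in $\gamma_m\mathbb Z$.
\item Parseval for $xx^\dagger$ in the Hilbert--Schmidt basis of Pauli tensors gives $\sum_{R\ne I}q_R(x)^2=(2^{m-1}-1)\|x\|^4$. Thus $q_R^2$ averages to $\|x\|^4/(d_m+1)$ over the $4^{m-1}-1$ splits.
\end{itemize}

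For odd $m$ this average is below $\gamma_m^2=4d_m$, which forces some $q_R=0$. In the example above, this yields a balanced split $(40,40)$, and $40$ is excluded for $\Lambda_6$ by induction.

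For even $m$ the average only yields some $|q_R|\le\gamma_m$. One must then apply the induction hypothesis, for every $R$, to each half of norm below $d_m$, including zero and minimal halves. This leaves $T=(2d_m-\|x\|^2)/\gamma_m$ either a power of two or $T=3$. The case $T=3$ needs a further global argument. The paper reduces $\sum_{R\ne I}z_R^2$, with $z_R=q_R/\gamma_m$, modulo $3$, using that every $3+|z_R|$ must be a power of two.

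None of these three ingredients appears in your plan: the Pauli-indexed family of splits, the quadratic Parseval identity, and the $T=3$ exclusion. Iterating over several recursion levels, as you suggest, does not substitute for them.
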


\citet[Theorem~3.3]{Nebe2026} proved that no vector of \(\Lambda_m\) has norm strictly between \(d_m\) and \(3d_m/2\).
For \(m\ge4\), Theorem~\ref{thm:main-spectrum} shows that \(3d_m/2\) is the smallest norm above \(d_m\) and determines every subsequent norm below \(2d_m\).

Our proof develops the recursive approach of \citet[Section~3]{KliuchnikovSchoenenbeck2024}, who characterize the minimal vectors of Barnes--Wall lattices as stabilizer states.
We work with a Gaussian-integer model \(L_{m-1}\), whose coordinatewise realification is isometric to \(\Lambda_{m}\), as established in Section~\ref{sec:lattice-models}.
We use lattice automorphisms arising from Clifford unitaries to obtain recursive decompositions.
Specifically, after applying such an automorphism, we write a vector in \(L_{m-1}\) as a pair \((u,v)\), where \(u,v\in L_{m-2}\).
The subsequent analysis requires additional control of the norms of these coordinate subvectors, which we obtain through 2-divisibility and a matrix Parseval identity.

We now outline how these ingredients constrain the possible norms.
For \(m\ge2\), each nonidentity tensor product \(R\) of \(m-1\) Pauli matrices admits a Clifford unitary \(C\) that is an automorphism of \(L_{m-1}\). For any \(x\in L_{m-1}\), the coordinate subvectors in \(Cx=(u_R,v_R)\) belong to \(L_{m-2}\) and satisfy
\[
\|u_R\|^2=\frac{\|x\|^2+q_R(x)}{2},
\qquad
\|v_R\|^2=\frac{\|x\|^2-q_R(x)}{2},
\]
where \(q_R(x):=\langle x,Rx\rangle\) is the difference between the squared norms of \(u_R\) and \(v_R\).

For \(x\in L_{m-1}\) with \(d_m<\|x\|^2<2d_m\), 2-divisibility gives a uniform lower bound for the nonzero values of \(q_R(x)^2\). We compare this bound with the average of \(q_R(x)^2\) over all nonidentity \(R\), which is determined exactly by a matrix Parseval identity.
When \(m\) is odd, this average lies below the lower bound, forcing some \(q_R(x)=0\) and hence an equal-norm decomposition to which the induction applies.
For even \(m\), combining the averaging bound with the induction hypothesis leaves at most one candidate norm beyond those listed in Theorem 1.1. We rule out this possibility by considering the square-sum identity modulo \(3\).

We realize the candidate norms using the affine-subspace description of \(\Lambda_m\).
For each exponent \(j\) in Theorem~\ref{thm:main-spectrum}, we construct two minimal vectors \(p,q\) supported on suitable affine subspaces and satisfying \((p,q)=-2^{j-1}\), so that \(\|p+q\|^2=2d_m-2^j\).

The paper is organized as follows.
Section~\ref{sec:lattice-models} introduces the lattice models and their basic arithmetic properties.
Sections~\ref{sec:clifford-decompositions} and~\ref{sec:dyadic-parseval} establish the formulas for the norms of the coordinate subvectors, divisibility, and the matrix Parseval identity used in the proof.
Sections~\ref{sec:attainment} and~\ref{sec:exclusion} prove Theorem~\ref{thm:main-spectrum} by constructing the listed norms and excluding all others, respectively.
Finally, Section~\ref{sec:conclusion} summarizes our results and discusses possible directions for future work.

\section{Barnes--Wall Lattices over \texorpdfstring{\(\mathbb Z\)}{Z} and \texorpdfstring{\(\mathbb Z[i]\)}{Z[i]}}
\label{sec:lattice-models}

We recall two realizations of the Barnes--Wall lattices and the basic
arithmetic properties used below.
The affine-subspace realization is used to construct vectors of the
required norms, whereas the Gaussian-integer realization is used in excluding other norms.
We connect the two realizations by an isometric realification map, which allows norm properties to be transferred between the two models.

\subsection{The \texorpdfstring{\(\mathbb Z\)}{Z}-Lattice \texorpdfstring{\(\Lambda_m\)}{Lambda m}}
\label{subsec:real-lattice}

For an integer \(m\ge1\), put \(V_m:=\mathbb F_2^m\), and let
\((e_v)_{v\in V_m}\) be the standard orthonormal basis of
\(\mathbb R^{V_m}\).
Define the standard Euclidean lattice
\[
\Gamma_m:=\bigoplus_{v\in V_m}\mathbb Z e_v.
\]
For a subset \(S\subseteq V_m\), write
\[
x_S:=\sum_{v\in S}e_v
\]
for its characteristic vector.

We follow the notation of
\citet[Section~2]{Nebe2026}
for the affine-subspace construction originating in
\citet[Sections~3--4]{BarnesWall1959}.
For every \(m\ge1\), define the real Barnes--Wall lattice
\(\Lambda_m\) and its companion \(\Delta_m\) by
\begin{align}
\Lambda_m
&:=\left\langle
2^{\lfloor(m-\dim U)/2\rfloor}x_U:
U\subseteq V_m\text{ affine}
\right\rangle_{\mathbb Z},
\label{eq:Lambda-affine}\\
\Delta_m
&:=\left\langle
2^{\lfloor(m-\dim U+1)/2\rfloor}x_U:
U\subseteq V_m\text{ affine}
\right\rangle_{\mathbb Z}.
\label{eq:Delta-affine}
\end{align}
To connect the affine-subspace realization with the Gaussian-integer
model introduced below, we use the following recursive descriptions
of \(\Lambda_m\) and \(\Delta_m\).
Equation~\eqref{eq:Lambda-recursion} is a restatement of the recursion in
\citet[Theorem~3.2]{Nebe2026}, written with our coordinate grouping.
Equation~\eqref{eq:Delta-recursion} follows from this description
and the transformation in
\citet[Theorem~4.3]{BarnesWall1959}.

\begin{proposition}\label{prop:real-recursion}
For \(m\ge2\), the decomposition
\(V_m=\mathbb F_2\times V_{m-1}\) identifies
\[
\Gamma_m\cong\Gamma_{m-1}\perp\Gamma_{m-1},
\]
with vectors written as pairs \((u,v)\). Then
\begin{align}
\Lambda_m
&=\{(u,v)\in\Lambda_{m-1}^{\,2}:u-v\in\Delta_{m-1}\},
\label{eq:Lambda-recursion}\\
\Delta_m
&=\{(u,v)\in\Delta_{m-1}^{\,2}:u-v\in2\Lambda_{m-1}\},
\label{eq:Delta-recursion}
\end{align}
with initial conditions
\[
\Lambda_1=\mathbb Z^2,
\qquad
\Delta_1=\{(a,b)\in\mathbb Z^2:a\equiv b\pmod2\}.
\]
\end{proposition}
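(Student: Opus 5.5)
We prove both recursions by double inclusion. The generators in \eqref{eq:Lambda-affine} and \eqref{eq:Delta-affine} are sorted according to how the affine subspace \(U\subseteq V_m=\mathbb F_2\times V_{m-1}\) meets the two slices \(\{0\}\times V_{m-1}\) and \(\{1\}\times V_{m-1}\). There are three possibilities. First, \(U\) may lie in one slice, so \(U=\{\epsilon\}\times W\) with \(W\) affine and \(\dim W=\dim U\). Second, \(U\) may be the full preimage \(\mathbb F_2\times W\), with \(\dim U=\dim W+1\). Third, \(U\) may be a graph: there are affine subspaces \(W,W'\subseteq V_{m-1}\) of dimension \(\dim U-1\), \(W'\) a translate of \(W\), with \(U=(\{0\}\times W)\cup(\{1\}\times W')\). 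The second case is the special case \(W'=W\) of the third. In pair notation \(x_U\) is \((x_W,0)\), \((0,x_W)\), or \((x_W,x_{W'})\). The initial conditions are immediate from the definitions: for \(m=1\), \(\Lambda_1\) contains \(x_{\{v\}}\) with coefficient \(2^0\), and \(\Delta_1\) is generated by \(2e_v\) and \(x_{V_1}=(1,1)\).

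\emph{Inclusion \(\subseteq\).} We check that each generator satisfies the right-hand conditions, which are \(\mathbb Z\)-linear.
\begin{itemize}
\item For \(U=\{0\}\times W\) with \(k=\dim W\), the generator of \(\Lambda_m\) is \((2^{\lfloor(m-k)/2\rfloor}x_W,0)\). Here \(u=2^{\lfloor(m-k)/2\rfloor}x_W\), and \(\lfloor(m-k)/2\rfloor=\lfloor((m-1)-k+1)/2\rfloor\). So \(u\in\Delta_{m-1}\subseteq\Lambda_{m-1}\), and hence also \(u-v=u\in\Delta_{m-1}\).
\item For the \(\Delta_m\) generator of the same \(U\), the exponent is \(\lfloor(m-k+1)/2\rfloor=1+\lfloor((m-1)-k)/2\rfloor\). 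So \(u\in 2\Lambda_{m-1}\subseteq\Delta_{m-1}\), using \(2\Lambda_{m-1}\subseteq\Delta_{m-1}\). This inclusion is clear from the exponents, since \(1+\lfloor(n-k)/2\rfloor\ge\lfloor(n-k+1)/2\rfloor\).
\item For a graph \(U\) of dimension \(k+1\), the generator is \(c(x_W,x_{W'})\) with \(c=2^{\lfloor(m-k-1)/2\rfloor}\), which is exactly the coefficient for \(\dim W=k\) in \(\Lambda_{m-1}\). The difference is \(c(x_W-x_{W'})=c x_{W\cup W'}-2c\,x_{W'}\). When \(W\ne W'\), \(W\cup W'\) is an affine subspace of dimension \(k+1\), with \(\Delta_{m-1}\)-coefficient \(\lfloor(m-1-k-1+1)/2\rfloor=\lfloor(m-k-1)/2\rfloor\). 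Thus \(c x_{W\cup W'}\in\Delta_{m-1}\). Also \(2c\,x_{W'}\in2\Lambda_{m-1}\subseteq\Delta_{m-1}\).
\item The \(\Delta_m\) case for a graph \(U\) is identical with exponents shifted by one. The key identity is again \(x_W-x_{W'}=x_{W\cup W'}-2x_{W'}\).
\end{itemize}

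\emph{Inclusion \(\supseteq\).} Take \((u,v)\) in the right-hand side of \eqref{eq:Lambda-recursion} and write \((u,v)=(u-v,0)+(v,v)\).
\begin{itemize}
\item For the second summand, \(v\in\Lambda_{m-1}\) is a combination of \(2^{\lfloor(m-1-k)/2\rfloor}x_W\). Each \((x_W,x_W)=x_{\mathbb F_2\times W}\) has dimension \(k+1\) and coefficient \(\lfloor(m-k-1)/2\rfloor\) in \(\Lambda_m\), so it matches.
\item For the first summand, \(u-v\in\Delta_{m-1}\) is a combination of \(2^{\lfloor(m-k)/2\rfloor}x_W\). The vector \((x_W,0)=x_{\{0\}\times W}\) has dimension \(k\) and coefficient \(\lfloor(m-k)/2\rfloor\) in \(\Lambda_m\), so it matches.
\item The same decomposition proves \eqref{eq:Delta-recursion}. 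Now \(v\in\Delta_{m-1}\) gives \((v,v)\) with matching exponent \(\lfloor(m-k)/2\rfloor\). Also \(u-v\in2\Lambda_{m-1}\) gives \((u-v,0)\) with exponent \(1+\lfloor(m-1-k)/2\rfloor=\lfloor(m-k+1)/2\rfloor\), as required.
\end{itemize}

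The only step needing care is the \(\subseteq\) direction for graph subspaces. There we need the fact that the union of two parallel affine subspaces is affine of one higher dimension, together with the bookkeeping of floor exponents for both parities of \(m-k\). Everything else reduces to matching coefficients.
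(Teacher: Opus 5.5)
Your proof is correct, but it takes a different route from the paper. The paper does not verify the recursions from the generators. It takes the $\Lambda$-recursion from Nebe's Theorem~3.2, in the form $\Lambda_m=\{(\ell+t,\ell):\ell\in\Lambda_{m-1},\,t\in\Delta_{m-1}\}$. It then obtains the $\Delta$-recursion by applying the Barnes--Wall map $H_m(a,b)=(a+b,a-b)$, quoted from Barnes--Wall's Theorem~4.3 as satisfying $H_m\Lambda_m=\Delta_m$; this map sends $(\ell+t,\ell)$ to $(2\ell+t,t)$.

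You instead classify the affine subspaces of $\mathbb F_2\times V_{m-1}$ by how they meet the two slices, and prove both recursions directly by double inclusion with floor-exponent bookkeeping. The exponent matches all check out, including the identity $x_W-x_{W'}=x_{W\cup W'}-2x_{W'}$ in the graph case and its shifted version landing in $2\Lambda_{m-1}$. The slice $\{1\}\times W$, which you skip, is symmetric.

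The paper's argument is shorter, but it rests on two external results and on matching their coordinate conventions with the grouping used here. Yours is self-contained and treats $\Lambda_m$ and $\Delta_m$ on an equal footing. It also recovers $H_m\Lambda_m=\Delta_m$ as a consequence of the two recursions, rather than needing it as an input.
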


\begin{proof}
The initial conditions and the inclusions
\[
2\Lambda_m\subseteq\Delta_m\subseteq\Lambda_m
\]
follow directly from the defining generators.
For \(m\ge2\), identify the two coordinate subvectors in \citet[Theorem~3.2]{Nebe2026} with those indexed by \(\{0\}\times V_{m-1}\) and \(\{1\}\times V_{m-1}\), respectively.
Then that theorem gives
\[
\Lambda_m
=\{(\ell+t,\ell):
\ell\in\Lambda_{m-1},\
t\in\Delta_{m-1}\}.
\]
Since \(\Delta_{m-1}\subseteq\Lambda_{m-1}\), this is
equivalent to \eqref{eq:Lambda-recursion}.
To derive the corresponding recursion for \(\Delta_m\), we apply
the transformation in
\citet[Theorem~4.3]{BarnesWall1959}.
With the same coordinate grouping, this transformation is
\[
H_m(a,b)=(a+b,a-b)
\]
and satisfies
\[
H_m\Lambda_m=\Delta_m,
\qquad
\|H_mx\|^2=2\|x\|^2.
\]
Applying \(H_m\) to the preceding expression yields
\[
\begin{aligned}
\Delta_m
&=\{(2\ell+t,t):
\ell\in\Lambda_{m-1},\
t\in\Delta_{m-1}\}\\
&=\{(u,v)\in\Delta_{m-1}^{\,2}:
u-v\in2\Lambda_{m-1}\}.
\end{aligned}
\]
This proves \eqref{eq:Delta-recursion} and completes the proof.
\end{proof}

\subsection{The Hermitian \texorpdfstring{\(\mathbb Z[i]\)}{Z[i]}-Lattice \texorpdfstring{\(L_n\)}{L n}}

We use the basis matrix of
\citet[Section~2.5]{KliuchnikovSchoenenbeck2024}
and specialize their construction to \(E=\mathbb Q(i)\),
whose ring of integers is \(\mathbb Z[i]\).
We denote the resulting lattice by \(L_n\).
Its compatibility with the real lattices defined above will be
established in Section~\ref{subsec:recursion-realification}.
For the earlier squaring construction of Barnes--Wall lattices,
see \citet[Section~IV.B]{Forney1988PartII}.

Put
\[
\pi:=1+i,\qquad
B:=\begin{pmatrix}\pi&1\\0&1\end{pmatrix},
\]
and, for every integer \(n\ge0\), define
\begin{equation}\label{eq:Gaussian-lattice-definition}
L_n:=B^{\otimes n}\mathbb Z[i]^{2^n}\subseteq\mathbb C^{2^n}.
\end{equation}
Here we take \(B^{\otimes0}=(1)\), so that \(L_0=\mathbb Z[i]\).

\citet[Definition~1]{MicciancioNicolosi2008}
define the Gaussian lattice using the rows of
\(M^{\otimes n}\), where
\[
M=\begin{pmatrix}1&1\\0&\pi\end{pmatrix}.
\]
After converting to column generators, the two descriptions agree since
\[
B=M^{\mathsf T}Q,
\qquad
Q=\begin{pmatrix}\pi&1\\-1&0\end{pmatrix}
\in\mathrm{GL}_2(\mathbb Z[i]),
\qquad \det Q=1.
\]
Tensoring this identity gives the same lattice, without rescaling.

We also equip \(\mathbb C^{2^n}\) with the standard Hermitian form
\(\langle x,y\rangle:=x^\dagger y=\sum_{k=1}^{2^n}\overline{x_k}y_k\),
where \(\dagger\) denotes conjugate transpose. The associated squared norm is \(\|x\|^2:=\langle x,x\rangle=\sum_{k=1}^{2^n}|x_k|^2\).
For \(E=\mathbb Q(i)\), this squared norm agrees with the
norm convention in
\citet[Section~2.2]{KliuchnikovSchoenenbeck2024}.

\subsection{Recursive Structure and Realification}
\label{subsec:recursion-realification}

We now identify the Gaussian model with the real lattices
defined in Section~\ref{subsec:real-lattice}.
The tensor-product construction yields recursions for \(L_n\)
and \(\pi L_n\) that, under coordinatewise realification,
agree with the simultaneous recursion in
Proposition~\ref{prop:real-recursion}.

For \(n\ge1\), write a vector in \(\mathbb C^{2^n}\), with coordinates indexed in tensor order by \(\mathbb F_2^n\), as \((u,v)\). Here \(u\) and \(v\) are the coordinate subvectors indexed by \(\{0\}\times\mathbb F_2^{n-1}\) and \(\{1\}\times\mathbb F_2^{n-1}\), respectively, each with \(2^{n-1}\) coordinates.
Grouping coordinates according to this first tensor factor gives
\[
B^{\otimes n}
=
\begin{pmatrix}
\pi B^{\otimes(n-1)} & B^{\otimes(n-1)}\\
0 & B^{\otimes(n-1)}
\end{pmatrix}.
\]
Consequently,
\[
L_n=\{(\pi a+b,b):a,b\in L_{n-1}\}.
\]
Equivalently,
\begin{equation}\label{eq:Gaussian-recursion}
L_n
=\{(u,v)\in L_{n-1}^2:
u-v\in\pi L_{n-1}\}.
\end{equation}
With \(\phi=\pi\), this is equivalent to the parametrization
\(BW_n=\{[u,u+\phi v]:u,v\in BW_{n-1}\}\), with \(BW_0=\mathbb Z[i]\),
used by \citet[Definition~1.1]{GrigorescuPeikert2014}.

Multiplying the parametrization \(L_n=\{(\pi a+b,b):a,b\in L_{n-1}\}\)
by \(\pi\), and using \(\pi^2=2i\) and \(iL_{n-1}=L_{n-1}\), we obtain
\[
\pi L_n
=\{(c+2a,c):a\in L_{n-1},\ c\in\pi L_{n-1}\}.
\]
Since \(2L_{n-1}\subseteq\pi L_{n-1}\), this is equivalently
\begin{equation}\label{eq:pi-Gaussian-recursion}
\pi L_n
=\{(u,v)\in\pi L_{n-1}^2:
u-v\in2L_{n-1}\}.
\end{equation}

For \(n\ge0\), index the coordinates of \(\mathbb C^{2^n}\) by \(V_n=\mathbb F_2^n\) in tensor order.
Identifying \(V_{n+1}=V_n\times\mathbb F_2\), define the coordinatewise realification
\begin{equation}\label{eq:coordinatewise-realification}
\rho_n:\mathbb C^{V_n}\longrightarrow\mathbb R^{V_{n+1}},
\qquad
\rho_n((z_a)_{a\in V_n})
:=\sum_{a\in V_n}
\bigl(\operatorname{Re}(z_a)e_{(a,0)}
+\operatorname{Im}(z_a)e_{(a,1)}\bigr).
\end{equation}
With this ordering, the decomposition according to the first tensor factor satisfies
\begin{equation}\label{eq:realification-respects-blocks}
\rho_n(u,v)
=\bigl(\rho_{n-1}(u),\rho_{n-1}(v)\bigr)
\qquad(n\ge1).
\end{equation}

The following lemma identifies the two realizations in our specified
coordinate order and verifies that no rescaling is required.
\begin{lemma}
\label{lem:Gaussian-realization}
For every \(n\ge0\), the map
\[
\rho_n:\mathbb C^{V_n}\longrightarrow\mathbb R^{V_{n+1}}
\]
is a real-linear isometry satisfying
\begin{equation}\label{eq:Gaussian-real-identifications}
\rho_n(L_n)=\Lambda_{n+1},
\qquad
\rho_n(\pi L_n)=\Delta_{n+1}.
\end{equation}
\end{lemma}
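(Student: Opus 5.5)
The plan is to argue by simultaneous induction on \(n\), proving both identities in \eqref{eq:Gaussian-real-identifications} together. The isometry claim is immediate from the definition \eqref{eq:coordinatewise-realification}. Each \(z_a\) contributes \(\operatorname{Re}(z_a)^2+\operatorname{Im}(z_a)^2=|z_a|^2\) to the squared norm of the image. Moreover, \(\rho_n\) is clearly real-linear and bijective onto \(\mathbb R^{V_{n+1}}\). So the whole content of the lemma lies in the two lattice identities.

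The base case is \(n=0\). Here \(L_0=\mathbb Z[i]\), and \(\rho_0(a+bi)=(a,b)\), so \(\rho_0(L_0)=\mathbb Z^2=\Lambda_1\). Also \(\pi L_0=(1+i)\mathbb Z[i]\) consists of the Gaussian integers \(a+bi\) with \(a\equiv b\pmod 2\). This is because \((1+i)(c+di)=(c-d)+(c+d)i\), and conversely, if \(a\equiv b\pmod 2\), then \(a+bi=(1+i)\cdot\frac{(a+b)+(b-a)i}{2}\) with integral quotient. Hence \(\rho_0(\pi L_0)=\Delta_1\), matching the initial conditions in Proposition~\ref{prop:real-recursion}.

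For the inductive step, assume both identities hold for \(n-1\) with \(n\ge1\). By \eqref{eq:realification-respects-blocks}, \(\rho_n\) acts blockwise on pairs \((u,v)\). Since \(\rho_{n-1}\) is a bijection, the recursion \eqref{eq:Gaussian-recursion} gives
\[
\rho_n(L_n)=\{(\rho_{n-1}u,\rho_{n-1}v):u,v\in L_{n-1},\ u-v\in\pi L_{n-1}\}.
\]
By the induction hypothesis and linearity, \(\rho_{n-1}(u-v)=\rho_{n-1}u-\rho_{n-1}v\). So this set equals
\[
\{(u',v')\in\Lambda_n^2:u'-v'\in\Delta_n\},
\]
which is \(\Lambda_{n+1}\) by \eqref{eq:Lambda-recursion}, applied with \(m=n+1\ge2\). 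The same argument, now using \eqref{eq:pi-Gaussian-recursion} together with \(\rho_{n-1}(2L_{n-1})=2\Lambda_n\), yields \(\rho_n(\pi L_n)=\{(u',v')\in\Delta_n^2:u'-v'\in2\Lambda_n\}=\Delta_{n+1}\) by \eqref{eq:Delta-recursion}.

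The only point requiring care is bookkeeping of the coordinate orders: the first-tensor-factor splitting of \(V_n\) must be compatible with the splitting \(V_{n+1}=\mathbb F_2\times V_n\) used in Proposition~\ref{prop:real-recursion}, with the real/imaginary bit appended last. This compatibility is exactly what \eqref{eq:realification-respects-blocks} records. Granting it, the argument is a direct transport of the recursions, and I expect no real obstacle beyond verifying the base case above.
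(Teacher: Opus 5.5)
Your proposal is correct and takes the same approach as the paper. The isometry is checked directly, and both identities are proved by simultaneous induction on \(n\). The base case uses \((1+i)(c+di)=(c-d)+(c+d)i\), and the inductive step transfers the Gaussian recursions through the bijection \(\rho_{n-1}\) to the real recursions of Proposition~\ref{prop:real-recursion}; you also verify explicitly the reverse inclusion \(\{a+bi: a\equiv b \pmod 2\}\subseteq\pi\mathbb Z[i]\), which the paper leaves implicit.
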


\begin{proof}
The map \(\rho_n\) is a real-linear bijection, with inverse obtained by sending each coordinate pair \((x_{(a,0)},x_{(a,1)})\) to \(x_{(a,0)}+i x_{(a,1)}\).
It is an isometry since
\[
\|\rho_n(z)\|^2
=\sum_{a\in V_n}
\bigl((\operatorname{Re}z_a)^2+(\operatorname{Im}z_a)^2\bigr)
=\sum_{a\in V_n}|z_a|^2
=\|z\|^2.
\]

We prove the two lattice identifications simultaneously by induction on \(n\). For \(n=0\), we have
\[
\rho_0(L_0)=\rho_0(\mathbb Z[i])=\mathbb Z^2=\Lambda_1.
\]
Since
\[
(1+i)(a+ib)=(a-b)+i(a+b),
\]
we also obtain
\[
\begin{aligned}
\rho_0(\pi L_0)
&=\{(a-b,a+b):a,b\in\mathbb Z\}\\
&=\{(r,s)\in\mathbb Z^2:r\equiv s\pmod2\}
=\Delta_1.
\end{aligned}
\]

Now let \(n\ge1\) and assume that
\[
\rho_{n-1}(L_{n-1})=\Lambda_n,
\qquad
\rho_{n-1}(\pi L_{n-1})=\Delta_n.
\]
For \(a,b\in L_{n-1}\), real-linearity and injectivity of \(\rho_{n-1}\) give
\[
a-b\in\pi L_{n-1}
\quad\Longleftrightarrow\quad
\rho_{n-1}(a)-\rho_{n-1}(b)\in\Delta_n.
\]
Since \(\rho_{n-1}\) maps \(L_{n-1}\) bijectively onto \(\Lambda_n\), the Gaussian recursion therefore yields
\[
\begin{aligned}
\rho_n(L_n)
&=\{(u,v)\in\Lambda_n^2:
u-v\in\Delta_n\}\\
&=\Lambda_{n+1}.
\end{aligned}
\]
Similarly, real-linearity gives
\[
\rho_{n-1}(2L_{n-1})=2\Lambda_n.
\]
Thus, for \(a,b\in\pi L_{n-1}\),
\[
a-b\in2L_{n-1}
\quad\Longleftrightarrow\quad
\rho_{n-1}(a)-\rho_{n-1}(b)\in2\Lambda_n.
\]
Using the recursion for \(\pi L_n\), we obtain
\[
\begin{aligned}
\rho_n(\pi L_n)
&=\{(u,v)\in\Delta_n^2:
u-v\in2\Lambda_n\}\\
&=\Delta_{n+1}.
\end{aligned}
\]
The final equalities in the two calculations follow from
\eqref{eq:Lambda-recursion} and \eqref{eq:Delta-recursion},
respectively.
This completes the induction.
\end{proof}

\subsection{Minimum and Norm Divisibility}
We now record the minimum and norm divisibility of \(L_n\),
which will be used in the subsequent proofs.
We first recall the corresponding results for the real Barnes--Wall
lattices and then transfer them to the Gaussian-integer model via
the realification isometry of Lemma~\ref{lem:Gaussian-realization}.

For the real lattice defined by \eqref{eq:Lambda-affine}, the minimum
goes back to the original calculation of
\citet[Theorem~3.2]{BarnesWall1959}.
Specializing their general minimum formula to
\(\lambda_r=\lfloor r/2\rfloor\) gives
\begin{equation}\label{eq:Lambda-minimum}
\min(\Lambda_m)=2^{m-1}
\qquad(m\ge1).
\end{equation}
See also \citet[Theorem~2.3]{Nebe2026} for a restatement in the same
normalization.

The norm divisibility can likewise be deduced from the affine generators
and the duality relation in \citet[Theorem~3.1]{BarnesWall1959}, 
which expresses the evenness of the real
Barnes--Wall lattice.
Indeed, the rescaled lattice
\(2^{-\frac12\lfloor(m-1)/2\rfloor}\Lambda_m\) is even for \(m\ge2\);
see \citet[Remark~2.4]{Nebe2026}.
Rescaling squared norms therefore gives
\begin{equation}\label{eq:Lambda-norm-divisibility}
\|y\|^2\in2^{\lceil m/2\rceil}\mathbb Z
\qquad(m\ge2,\ y\in\Lambda_m).
\end{equation}
This divisibility is also recorded by
\citet*[p.~1739]{HeningerRainsSloane2006} in the same normalization.

The realification isometry then transfers these properties to the Gaussian model.

\begin{lemma}\label{lem:Ln-minimum}
For every \(n\ge0\),
\begin{equation}\label{eq:Ln-minimum}
\min(L_n)=2^n.
\end{equation}
\end{lemma}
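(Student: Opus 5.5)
The plan is to transfer \eqref{eq:Lambda-minimum} to the Gaussian model through the realification isometry of Lemma~\ref{lem:Gaussian-realization}. By that lemma, \(\rho_n\) is a real-linear bijection from \(\mathbb C^{V_n}\) onto \(\mathbb R^{V_{n+1}}\) that preserves squared norms and satisfies \(\rho_n(L_n)=\Lambda_{n+1}\). Restricting \(\rho_n\) to \(L_n\) therefore gives a bijection onto \(\Lambda_{n+1}\) that sends \(0\) to \(0\) and preserves \(\|\cdot\|^2\). It follows that the sets of squared norms of nonzero vectors coincide:
\[
\{\|x\|^2:0\ne x\in L_n\}=\{\|y\|^2:0\ne y\in\Lambda_{n+1}\}.
\]
In particular the two minima are equal.

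Applying \eqref{eq:Lambda-minimum} with \(m=n+1\ge1\) gives \(\min(\Lambda_{n+1})=2^{(n+1)-1}=2^n\). Hence \(\min(L_n)=2^n\) for every \(n\ge0\), which is \eqref{eq:Ln-minimum}.

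There is essentially no obstacle; the only points to check are indexing. The first is that the case \(n=0\) is covered: here \(L_0=\mathbb Z[i]\) has minimum \(1=2^0\), consistent with \(\Lambda_1=\mathbb Z^2\). The second is that \(\rho_n\) involves no rescaling, so the minimum is carried over unchanged. As an independent sanity check, one could also argue directly from the recursion \eqref{eq:Gaussian-recursion}. Write \(x=(u,v)\) with \(u-v\in\pi L_{n-1}\). If \(u\) and \(v\) are both nonzero, then \(\|x\|^2\ge 2\cdot 2^{n-1}\) by induction. If one of them vanishes, the other lies in \(\pi L_{n-1}\) and so has norm at least \(2\cdot2^{n-1}\). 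Equality is attained by \((\pi a,0)\) with \(a\) minimal, since multiplication by \(\pi\) doubles norms. The primary proof, however, is simply the isometric transfer.
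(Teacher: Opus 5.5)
Your proof is correct and is exactly the paper's argument: the paper combines the norm-preserving identification \(\rho_n(L_n)=\Lambda_{n+1}\) from Lemma~\ref{lem:Gaussian-realization} with the cited minimum \(\min(\Lambda_{n+1})=2^{n}\). Your secondary induction via \(L_n=\{(u,v)\in L_{n-1}^2:u-v\in\pi L_{n-1}\}\) is also valid, and it would give a self-contained proof that does not rely on the cited Barnes--Wall minimum computation.
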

\begin{proof}
Lemma~\ref{lem:Gaussian-realization} and \eqref{eq:Lambda-minimum} give
\(\min(L_n)=\min(\Lambda_{n+1})=2^n\).
\end{proof}

\begin{lemma}\label{lem:Ln-norm-divisibility}
For every \(n\ge1\) and \(x\in L_n\),
\begin{equation}\label{eq:Ln-norm-divisibility}
\|x\|^2\in2^{\lceil(n+1)/2\rceil}\mathbb Z.
\end{equation}
\end{lemma}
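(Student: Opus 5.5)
The plan is to transfer the real divisibility statement \eqref{eq:Lambda-norm-divisibility} through the realification isometry, in the same way as the proof of Lemma~\ref{lem:Ln-minimum}. Fix \(n\ge1\) and \(x\in L_n\), and put \(m:=n+1\ge2\). By Lemma~\ref{lem:Gaussian-realization}, \(\rho_n(x)\) lies in \(\rho_n(L_n)=\Lambda_{n+1}=\Lambda_m\). Since \(\rho_n\) is an isometry, we also have \(\|\rho_n(x)\|^2=\|x\|^2\).

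Applying \eqref{eq:Lambda-norm-divisibility} with \(m=n+1\) then gives
\[
\|x\|^2=\|\rho_n(x)\|^2\in 2^{\lceil (n+1)/2\rceil}\mathbb Z,
\]
which is \eqref{eq:Ln-norm-divisibility}. The hypothesis \(n\ge1\) is exactly what makes \(m\ge2\), so \eqref{eq:Lambda-norm-divisibility} applies. (For \(n=0\) the claim would assert that \(|z|^2\) is even for every \(z\in\mathbb Z[i]\), which is false.)

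There is no real obstacle here. The substantive input, the evenness of \(2^{-\frac12\lfloor(m-1)/2\rfloor}\Lambda_m\), is quoted from the literature. As a sanity check on the exponent: rescaling by \(2^{-\frac12\lfloor (m-1)/2\rfloor}\) multiplies norms by \(2^{-\lfloor(m-1)/2\rfloor}\). Evenness therefore gives \(\|y\|^2\in 2^{1+\lfloor(m-1)/2\rfloor}\mathbb Z\), and \(1+\lfloor (m-1)/2\rfloor=\lceil m/2\rceil\), which matches \eqref{eq:Lambda-norm-divisibility}.

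If a self-contained argument were preferred, I would instead induct on \(n\) using the Gaussian recursion \eqref{eq:Gaussian-recursion}. Writing \(x=(\pi a+b,b)\), one has
\[
\|x\|^2=2\|a\|^2+2\|b\|^2+2\operatorname{Re}\bigl(\bar\pi\langle a,b\rangle\bigr).
\]
Closing the induction would then require a strengthened hypothesis on the Hermitian inner products \(\langle a,b\rangle\) for \(a,b\in L_{n-1}\), namely that they lie in a suitable power of \(\pi\) times \(\mathbb Z[i]\). Proving that strengthened statement is the step that would need care. I would not take this route, because the citation already suffices.
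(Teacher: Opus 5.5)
Your proposal is correct and is essentially the paper's proof. You pass to \(\rho_n(x)\in\Lambda_{n+1}\) via Lemma~\ref{lem:Gaussian-realization}, use that \(\rho_n\) preserves squared norms, and apply \eqref{eq:Lambda-norm-divisibility} with \(m=n+1\ge2\); your exponent check \(1+\lfloor(m-1)/2\rfloor=\lceil m/2\rceil\) and your remark that the claim fails at \(n=0\) are also accurate.
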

\begin{proof}
By Lemma~\ref{lem:Gaussian-realization},
\(\rho_n(x)\in\Lambda_{n+1}\) and
\(\|\rho_n(x)\|^2=\|x\|^2\).
Since \(n+1\ge2\), the conclusion follows from
\eqref{eq:Lambda-norm-divisibility} with \(m=n+1\).
\end{proof}

\section{Lattice Automorphisms and Recursive Decompositions}
\label{sec:clifford-decompositions}

\subsection{Clifford Unitaries as Lattice Automorphisms}

Let
\[
I=\begin{pmatrix}1&0\\0&1\end{pmatrix},
\qquad
X=\begin{pmatrix}0&1\\1&0\end{pmatrix},
\qquad
Y=\begin{pmatrix}0&-i\\i&0\end{pmatrix},
\qquad
Z=\begin{pmatrix}1&0\\0&-1\end{pmatrix}
\]
be the Pauli matrices. For \(n\ge0\), define
\[
\mathscr E_n
:=\{P_1\otimes\cdots\otimes P_n:
P_j\in\{I,X,Y,Z\}\}.
\]
Every \(R\in\mathscr E_n\) satisfies \(R^\dagger=R\) and \(R^2=I\), and is therefore both Hermitian and unitary.

In the standard Pauli-group formulation of \citet[Section~II]{DehaeneDeMoor2003}, a unitary \(C\) is called a Clifford unitary if
\[
CRC^\dagger\in\{\pm P:P\in\mathscr E_n\}
\qquad\text{for every }R\in\mathscr E_n.
\]
A complex-linear unitary map \(C\) is an automorphism of \(L_n\) if and only if \(CL_n=L_n\).
Following \citet[Sections~2.3.3 and~2.5.3]{KliuchnikovSchoenenbeck2024}, we use the phase gate, the phase-adjusted Hadamard gate, and the controlled-NOT gate to construct Clifford unitaries that are automorphisms of \(L_n\).
The following lemma shows that every nonidentity
\(R\in\mathscr E_n\) can be conjugated to
\(Z\otimes I^{\otimes(n-1)}\) by a Clifford unitary
that is an automorphism of \(L_n\).
This will allow us to use the lattice recursion to express
the difference of the squared norms of two coordinate subvectors in \(L_{n-1}\).

\begin{lemma}
\label{lem:tensor-normalization}
Let \(n\ge1\). For every \(R\in\mathscr E_n\setminus\{I\}\), there exists a Clifford unitary \(C\) such that
\begin{equation}\label{eq:tensor-normalization}
CL_n=L_n,
\qquad
CRC^\dagger=Z_1,
\end{equation}
where
\[
Z_1:=Z\otimes I^{\otimes(n-1)}.
\]
\end{lemma}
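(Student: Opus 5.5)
The plan is to build a small set of Clifford "generator" gates, check directly that each one preserves \(L_n\), and then run the standard Pauli-reduction algorithm using only these gates.

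\textbf{Tensor-factor permutations and single-factor gates.} Since \(L_n=B^{\otimes n}\mathbb Z[i]^{2^n}\), any permutation of tensor factors commutes with \(B^{\otimes n}\), so it maps \(L_n\) onto itself. Likewise, a one-qubit gate \(G\) placed on a single factor preserves \(L_n\) as soon as \(B^{-1}GB\in\mathrm{GL}_2(\mathbb Z[i])\). Using \(B^{-1}=\begin{pmatrix}1/\pi&-1/\pi\\0&1\end{pmatrix}\), I check this directly for two gates.
\begin{itemize}
\item For the phase gate \(S=\mathrm{diag}(1,i)\), one finds \(B^{-1}SB=\begin{pmatrix}1&-i\\0&i\end{pmatrix}\), using \((1-i)/\pi=-i\).
\item For \(X\), one finds \(B^{-1}XB=\begin{pmatrix}-1&0\\\pi&1\end{pmatrix}\).
\end{itemize}
Hence \(S\), \(Z=S^2\) and \(X\) on any factor are automorphisms of \(L_n\).

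\textbf{Hadamard and CNOT.} For the remaining gates I use the recursion \eqref{eq:Gaussian-recursion} on the first factor, then transport to other factors by permutations.
\begin{itemize}
\item For the phase-adjusted Hadamard \(H'=\pi^{-1}\begin{pmatrix}1&1\\1&-1\end{pmatrix}\) (unitary, since \(|\pi|^2=2\)), set \((u',v')=((u+v)/\pi,(u-v)/\pi)\) for \((u,v)\in L_n\).
  \begin{itemize}
  \item Since \(u-v\in\pi L_{n-1}\), we get \(v'\in L_{n-1}\).
  \item Next, \(u'=v'+(2/\pi)v\) with \(2/\pi=1-i\), so \(u'\in L_{n-1}\).
  \item Finally, \(u'-v'=(1-i)v=-i\pi v\in\pi L_{n-1}\).
  \end{itemize}
  So \(H'L_n\subseteq L_n\), and equality follows because \(H'\) is unitary with \(H'^2\) a scalar unit, or by comparing determinants.
\item For the CNOT with control on the first factor, \((u,v)\mapsto(u,X_kv)\) with \(X\) on some factor \(k\) of \(L_{n-1}\). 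We already know \(X_kv\in L_{n-1}\), so it remains to show \(X_kv-v\in\pi L_{n-1}\). This follows because \(B^{-1}(X-I)B=\begin{pmatrix}-2&0\\\pi&0\end{pmatrix}\) has all entries divisible by \(\pi\), so \((X-I)\) on a factor maps \(L_{n-1}\) into \(\pi L_{n-1}\).
\end{itemize}
Permutations then give CNOT between any ordered pair of factors. All gates used are Clifford unitaries in the sense of \citet{DehaeneDeMoor2003}.

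\textbf{The reduction.} Given \(R=P_1\otimes\cdots\otimes P_n\neq I\), I carry out the following steps in order.
\begin{enumerate}
\item On each factor with \(P_j\neq I\), conjugate by \(S\) and/or \(H'\) to turn \(P_j\) into \(\pm Z\). Here \(S\) sends \(X\) to \(\pm Y\) and \(H'\) exchanges \(X\) and \(Z\) up to sign. The scalar \(\pi^{-1}\) and the phase \(i\) cancel under conjugation.
\item The result is \(\pm Z\) on a nonempty support set \(T\). Pick \(t\in T\); for each other \(s\in T\), conjugate by the CNOT with control \(s\) and target \(t\). This uses \(Z_s\otimes Z_t\mapsto I\otimes Z_t\) and leaves the other factors untouched, until only \(\pm Z_t\) remains.
\item Apply a factor permutation moving \(t\) to position \(1\).
\item If the sign is \(-\), conjugate by \(X_1\), which sends \(-Z_1\) to \(Z_1\).
\end{enumerate}
The composite \(C\) is a product of automorphisms, so \(CL_n=L_n\) and \(CRC^\dagger=Z_1\).

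The main obstacle is the lattice-preservation check for \(H'\) and the CNOT; the reduction itself is the standard Clifford algorithm. The only care needed there is in tracking signs and phases, which the final \(X_1\) conjugation absorbs.
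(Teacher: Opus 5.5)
Your proof is correct and follows essentially the same route as the paper: you build lattice-preserving Clifford generators ($S$, $\widetilde H$, controlled-NOT, tensor-factor permutations) and then run the standard reduction of $R$ to $Z_1$ using local gates followed by controlled-NOT eliminations. The differences are minor. You verify $\widetilde H$ and the controlled-NOT through the recursion \eqref{eq:Gaussian-recursion}, using $(X-I)L_{n-1}\subseteq\pi L_{n-1}$, rather than through the explicit conjugates \eqref{eq:adjusted-HS-Gaussian-conjugates} and \eqref{eq:CX-Gaussian-conjugate}; equality of lattices for the controlled-NOT, which you leave implicit, follows because it is an involution. You also absorb signs with a final $X_1$ instead of using $\widetilde H S^\dagger$ on $Y$ factors.
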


\begin{proof}
Put \(G_n:=B^{\otimes n}\), so that \(L_n=G_n\mathbb Z[i]^{2^n}\).
For an invertible complex-linear map \(U\), we have
\[
UL_n=L_n
\quad\Longleftrightarrow\quad
(G_n^{-1}UG_n)\mathbb Z[i]^{2^n}
=\mathbb Z[i]^{2^n}.
\]
Thus \(UL_n=L_n\) if and only if
\begin{equation}\label{eq:lattice-automorphism-criterion}
G_n^{-1}UG_n\in\operatorname{GL}_{2^n}(\mathbb Z[i]).
\end{equation}
Equivalently, both \(G_n^{-1}UG_n\) and its inverse must have Gaussian-integer entries.
When \(U\) is unitary, this is exactly the condition that \(U\) is an automorphism of \(L_n\).
We apply this criterion to the generators used below.

Set
\[
S:=\begin{pmatrix}1&0\\0&i\end{pmatrix},
\qquad
\widetilde H:=\frac1\pi
\begin{pmatrix}1&1\\1&-1\end{pmatrix}.
\]
Both matrices are unitary, and direct calculation gives
\begin{equation}\label{eq:adjusted-HS-Gaussian-conjugates}
B^{-1}SB=
\begin{pmatrix}1&-i\\0&i\end{pmatrix},
\qquad
B^{-1}\widetilde HB=
\begin{pmatrix}0&-i\\1&0\end{pmatrix}.
\end{equation}
These matrices have Gaussian-integer entries and determinant \(i\), so they belong to \(\operatorname{GL}_2(\mathbb Z[i])\).
Hence \(S\) and \(\widetilde H\) are automorphisms of \(L_1\). Their conjugation actions are
\[
\begin{aligned}
SXS^\dagger&=Y,&
SYS^\dagger&=-X,&
SZS^\dagger&=Z,\\
\widetilde HX\widetilde H^\dagger&=Z,&
\widetilde HY\widetilde H^\dagger&=-Y,&
\widetilde HZ\widetilde H^\dagger&=X.
\end{aligned}
\]
Thus both are Clifford unitaries.

Let \(T:=\operatorname{CX}_{1\to2}\) denote the controlled-NOT gate, defined on the standard coordinate vectors of \(\mathbb C^4\) by
\[
T e_{(a,b)}=e_{(a,a+b)}
\qquad(a,b\in\mathbb F_2).
\]
This is a unitary permutation matrix. In the coordinate order \((0,0),(0,1),(1,0),(1,1)\), direct calculation gives
\begin{equation}\label{eq:CX-Gaussian-conjugate}
G_2^{-1}TG_2
=
\begin{pmatrix}
1&0&\overline\pi&0\\
0&1&-1&0\\
0&0&-1&0\\
0&0&\pi&1
\end{pmatrix}.
\end{equation}
The displayed matrix has Gaussian-integer entries and determinant \(-1\), so \(T\) is an automorphism of \(L_2\). Moreover,
\[
\begin{aligned}
T(X\otimes I)T^\dagger&=X\otimes X,&
T(I\otimes X)T^\dagger&=I\otimes X,\\
T(Z\otimes I)T^\dagger&=Z\otimes I,&
T(I\otimes Z)T^\dagger&=Z\otimes Z.
\end{aligned}
\]
Together with \(Y=iXZ\), these identities show that \(T\) is a Clifford unitary.

Let \(U\) be one of the preceding gates acting on \(k\in\{1,2\}\) tensor factors. For \(n\ge k\), we have
\[
G_n^{-1}\bigl(U\otimes I^{\otimes(n-k)}\bigr)G_n
=(G_k^{-1}UG_k)\otimes I^{\otimes(n-k)}
\in\operatorname{GL}_{2^n}(\mathbb Z[i]).
\]
Thus extending \(U\) by identity factors gives an automorphism of \(L_n\), and the extended operator remains a Clifford unitary.
Moreover, any permutation \(P\) of the tensor factors commutes with \(G_n=B^{\otimes n}\), so
\[
G_n^{-1}PG_n=P\in\operatorname{GL}_{2^n}(\mathbb Z[i]).
\]
Such a permutation is also Clifford, since it permutes the factors of each Pauli tensor product.
Conjugating the extended gates by these permutations therefore gives Clifford unitaries \(S\) and \(\widetilde H\) on any single factor, and controlled-NOT gates on any ordered pair of distinct factors. These operators are automorphisms of \(L_n\).
Their inverses have the same properties.

Write
\[
R=P_1\otimes\cdots\otimes P_n,
\qquad P_j\in\{I,X,Y,Z\},
\]
and let \(A:=\{j:P_j\ne I\}\). Since \(R\ne I\), the set \(A\) is nonempty. Define
\[
D_j:=
\begin{cases}
I,&P_j\in\{I,Z\},\\
\widetilde H,&P_j=X,\\
\widetilde H S^\dagger,&P_j=Y,
\end{cases}
\qquad
D:=D_1\otimes\cdots\otimes D_n.
\]
By the preceding argument, \(D\) is both a Clifford unitary and an automorphism of \(L_n\). The identities
\[
\widetilde H X\widetilde H^\dagger=Z,
\qquad
S^\dagger YS=X
\]
give
\begin{equation}\label{eq:local-Pauli-to-Z}
DRD^\dagger=Z_A:=\prod_{j\in A}Z_j,
\end{equation}
where \(Z_j:=I^{\otimes(j-1)}\otimes Z\otimes I^{\otimes(n-j)}\).

Since tensor-factor permutations are both Clifford unitaries and automorphisms of \(L_n\), we may assume, after such a permutation, that \(1\in A\).
For each \(j\in A\setminus\{1\}\), conjugation by \(\operatorname{CX}_{j\to1}\) fixes \(Z_j\), sends \(Z_1\) to \(Z_jZ_1\), and leaves every other \(Z_k\) unchanged.
Since \(Z_j^2=I\), it follows that
\begin{equation}\label{eq:CX-Z-elimination}
\operatorname{CX}_{j\to1}Z_A
\operatorname{CX}_{j\to1}^{\dagger}
=Z_{A\setminus\{j\}}.
\end{equation}
Applying these gates successively for all \(j\in A\setminus\{1\}\) therefore reduces \(Z_A\) to \(Z_1\).

Let \(C\) be the composite obtained by first applying \(D\), then the tensor-factor permutation if needed, and finally the successive controlled-NOT gates described above.
Each operation is both a Clifford unitary and an automorphism of \(L_n\), so \(C\) is a Clifford unitary satisfying
\[
CL_n=L_n,
\qquad
CRC^\dagger=Z_1.
\]
This proves the lemma.
\end{proof}

\subsection{Recursive Decompositions}

For \(n\ge0\), \(x\in L_n\), and \(R\in\mathscr E_n\), define
\begin{equation}\label{eq:imbalance-scalar}
q_R(x):=\langle x,Rx\rangle.
\end{equation}
Since \(R\) is Hermitian, \(q_R(x)\) is real.
Moreover, \(x\) has Gaussian-integer coordinates and \(R\) has Gaussian-integer entries, so \(q_R(x)\in\mathbb Z[i]\).
Hence
\[
q_R(x)\in\mathbb Z.
\]
For the identity matrix, we have \(q_I(x)=\|x\|^2\).

\begin{lemma}\label{lem:recursive-block-norms}
Let \(n\ge1\), \(x\in L_n\), and \(R\in\mathscr E_n\setminus\{I\}\).
There exist \(u_R,v_R\in L_{n-1}\) such that
\[
u_R-v_R\in\pi L_{n-1}
\]
and
\begin{equation}\label{eq:split-norms}
\|u_R\|^2=\frac{\|x\|^2+q_R(x)}2,
\qquad
\|v_R\|^2=\frac{\|x\|^2-q_R(x)}2.
\end{equation}
\end{lemma}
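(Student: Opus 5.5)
The plan is to apply Lemma~\ref{lem:tensor-normalization} to \(R\), obtaining a Clifford unitary \(C\) with \(CL_n=L_n\) and \(CRC^\dagger=Z_1\). We then set \(y:=Cx\) and split it according to the first tensor factor as \(y=(u_R,v_R)\). Since \(y\in L_n\), the Gaussian recursion \eqref{eq:Gaussian-recursion} immediately gives \(u_R,v_R\in L_{n-1}\) with \(u_R-v_R\in\pi L_{n-1}\). This settles the membership part of the statement.

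For the norms, we use unitarity of \(C\). First,
\[
\|u_R\|^2+\|v_R\|^2=\|y\|^2=\|Cx\|^2=\|x\|^2.
\]
Next, in the block decomposition by the first tensor factor, \(Z_1=Z\otimes I^{\otimes(n-1)}\) acts as \((u,v)\mapsto(u,-v)\). Hence
\[
\langle y,Z_1y\rangle=\|u_R\|^2-\|v_R\|^2 .
\]
On the other hand, \(C^\dagger C=I\) gives
\[
\langle y,Z_1y\rangle=\langle Cx,CRC^\dagger Cx\rangle=\langle x,C^\dagger CRC^\dagger Cx\rangle=\langle x,Rx\rangle=q_R(x).
\]
Solving the two linear equations for \(\|u_R\|^2\) and \(\|v_R\|^2\) yields \eqref{eq:split-norms}.

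No step is a real obstacle, since the hard work is contained in Lemma~\ref{lem:tensor-normalization}. The only point needing care is the convention that the first tensor factor indexes the blocks \(u,v\), and that this matches both \(Z_1\) and the recursion \eqref{eq:Gaussian-recursion}. Both use the same grouping, with coordinates indexed by \(\{0\}\times\mathbb F_2^{n-1}\) and \(\{1\}\times\mathbb F_2^{n-1}\), so they are consistent.
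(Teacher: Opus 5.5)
Your proposal is correct and follows the paper's proof essentially step for step. You normalize \(R\) to \(Z_1\) via Lemma~\ref{lem:tensor-normalization}, split \(Cx=(u_R,v_R)\) and use \eqref{eq:Gaussian-recursion} for membership, then use unitarity of \(C\) together with \(Z_1(u,v)=(u,-v)\) to get the sum and difference of the block norms, with the same first-tensor-factor grouping as the paper.
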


\begin{proof}
Let \[Z_1=Z\otimes I^{\otimes(n-1)}.\]
By Lemma~\ref{lem:tensor-normalization}, choose a Clifford unitary \(C\) such that
\[
CL_n=L_n,
\qquad
CRC^\dagger=Z_1.
\]
Write \(Cx=(u_R,v_R)\) according to the first tensor factor.
Since \(Cx\in L_n\), the Gaussian recursion gives
\[
u_R,v_R\in L_{n-1},
\qquad
u_R-v_R\in\pi L_{n-1}.
\]

Since \(C\) is unitary and \(CRC^\dagger=Z_1\), we have
\begin{equation}\label{eq:block-norm-difference}
\begin{aligned}
q_R(x)
&=\langle x,Rx\rangle\\
&=\langle Cx,CRx\rangle\\
&=\langle Cx,Z_1Cx\rangle\\
&=\langle(u_R,v_R),(u_R,-v_R)\rangle\\
&=\|u_R\|^2-\|v_R\|^2.
\end{aligned}
\end{equation}

Since \(C\) is unitary and the two coordinate subspaces are orthogonal, we also have
\begin{equation}\label{eq:block-norm-sum}
\|x\|^2=\|Cx\|^2
=\|(u_R,v_R)\|^2
=\|u_R\|^2+\|v_R\|^2.
\end{equation}

Adding and subtracting the preceding two identities gives
\[
\|u_R\|^2=\frac{\|x\|^2+q_R(x)}2,
\qquad
\|v_R\|^2=\frac{\|x\|^2-q_R(x)}2.
\]
This proves the lemma.
\end{proof}

In particular, if \(q_R(x)=0\), then the two coordinate subvectors have equal squared norm:
\[
\|u_R\|^2=\|v_R\|^2=\frac{\|x\|^2}{2}.
\]

\section{Norm Divisibility and the Matrix Parseval Identity}
\label{sec:dyadic-parseval}

\subsection{Congruences for the Norms of Coordinate Subvectors}

The condition \(u_R-v_R\in\pi L_{n-1}\) places \(u_R\) and \(v_R\) in the same coset of \(\pi L_{n-1}\) in \(L_{n-1}\).
We first establish a norm congruence for vectors in the same coset, and then apply it to show that \(q_R(x)\) is divisible by a power of \(2\).

\begin{lemma}\label{lem:Ln-class-congruence}
Let \(r\ge0\) and \(u,v\in L_r\). If \(u-v\in\pi L_r\), then
\begin{equation}\label{eq:Ln-class-congruence}
\|u\|^2-\|v\|^2
\in2^{\lfloor(r+1)/2\rfloor+1}\mathbb Z.
\end{equation}
\end{lemma}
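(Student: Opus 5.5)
The plan is to write the difference \(u-v\) as a \(\pi\)-multiple and expand the norm of \(u=v+\pi w\). Both resulting terms are then controlled by a single arithmetic fact about Hermitian inner products in \(L_r\), which we read off from the Gram matrix of the generator matrix \(B^{\otimes r}\). This avoids induction on \(r\) and also covers \(r=0\), where Lemma~\ref{lem:Ln-norm-divisibility} is not available.

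\emph{Step 1: the expansion.} Write \(u=v+\pi w\) with \(w\in L_r\). Since \(|\pi|^2=2\), we get
\[
\|u\|^2-\|v\|^2=2\|w\|^2+2\operatorname{Re}\bigl(\pi\langle v,w\rangle\bigr).
\]
Put \(N:=\lfloor(r+1)/2\rfloor\). It then suffices to prove two claims:
\[
\|w\|^2\in 2^{N}\mathbb Z,\qquad
\operatorname{Re}\bigl(\pi\langle v,w\rangle\bigr)\in 2^{N}\mathbb Z .
\]

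\emph{Step 2: inner products lie in \(\pi^r\mathbb Z[i]\).} Write \(v=G_r a\) and \(w=G_r b\) with \(G_r=B^{\otimes r}\) and \(a,b\in\mathbb Z[i]^{2^r}\). Then
\[
\langle v,w\rangle=a^\dagger (G_r^\dagger G_r)\, b,
\qquad
G_r^\dagger G_r=(B^\dagger B)^{\otimes r},
\qquad
B^\dagger B=\begin{pmatrix}2&\overline\pi\\ \pi&2\end{pmatrix}.
\]
Every entry of \(B^\dagger B\) lies in \(\pi\mathbb Z[i]\), because \(2=-i\pi^2\) and \(\overline\pi=-i\pi\). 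Hence every entry of the \(r\)-fold tensor power lies in \(\pi^r\mathbb Z[i]\). It follows that \(\langle v,w\rangle\in\pi^r\mathbb Z[i]\) for all \(v,w\in L_r\).

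\emph{Step 3: passing to real parts.} We use that \(\pi^k\mathbb Z[i]\) equals \(2^{k/2}\mathbb Z[i]\) for even \(k\), and equals \(2^{(k-1)/2}\pi\mathbb Z[i]\) for odd \(k\). The real parts of elements of \(\pi\mathbb Z[i]\) are all integers, since \(\operatorname{Re}(\pi(c+di))=c-d\). Therefore \(\operatorname{Re}(\pi^k\mathbb Z[i])=2^{\lfloor k/2\rfloor}\mathbb Z\).

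For the cross term, \(\pi\langle v,w\rangle\in\pi^{r+1}\mathbb Z[i]\). Its real part is therefore divisible by \(2^{\lfloor(r+1)/2\rfloor}=2^N\), which is the second claim.

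For the first claim, \(\|w\|^2=\langle w,w\rangle\) is a rational integer lying in \(\pi^r\mathbb Z[i]\). An integer divisible by \(\pi^r\) in \(\mathbb Z[i]\) is divisible by \(2^{\lceil r/2\rceil}\), since \(\pi^2\) is an associate of \(2\) and \(\pi^{2k-1}\mid n\) forces \(\pi^{2k}\mid n\) for \(n\in\mathbb Z\). Finally, \(\lceil r/2\rceil=\lfloor(r+1)/2\rfloor=N\), which is the first claim. Combining both claims with Step 1 gives \eqref{eq:Ln-class-congruence}.

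The main obstacle is finding the right source of extra divisibility for the cross term. A first attempt via polarization and Lemma~\ref{lem:Ln-norm-divisibility} only gives \(\langle v,w\rangle\in 2^{\lceil(r+1)/2\rceil-1}\mathbb Z[i]\). For odd \(r\) this falls one factor of \(2\) short. The Gram-matrix observation in Step 2 is what resolves this, and the step needing most care is the tensor-power bookkeeping of \(G_r^\dagger G_r\).
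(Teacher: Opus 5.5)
Your proof is correct and follows the paper's argument. You use the same expansion \(\|u\|^2-\|v\|^2=2\operatorname{Re}(\pi\langle v,w\rangle)+2\|w\|^2\), and the same Gram-matrix fact \(\langle v,w\rangle\in\pi^r\mathbb Z[i]\) handles the cross term. The only difference is the term \(2\|w\|^2\). The paper treats \(r=0\) separately and, for \(r\ge1\), invokes the stronger, literature-derived Lemma~\ref{lem:Ln-norm-divisibility} (divisibility by \(2^{\lceil(r+1)/2\rceil}\)). You instead reuse the Gram-matrix fact, together with the observation that an integer divisible by \(\pi^r\) is divisible by \(2^{\lceil r/2\rceil}=2^{\lfloor(r+1)/2\rfloor}\). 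That bound is exactly enough, and it makes the lemma self-contained and uniform in \(r\).
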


\begin{proof}
Choose \(w\in L_r\) such that \(u=v+\pi w\). Since \(|\pi|^2=2\),
\[
\|u\|^2-\|v\|^2
=2\operatorname{Re}\!\bigl(\pi\langle v,w\rangle\bigr)
+2\|w\|^2.
\]
If \(r=0\), then \(v,w\in\mathbb Z[i]\), so both terms on the right are even integers, proving the claim. Hence assume \(r\ge1\).

For the first term, observe that the one-factor Gram matrix
\[
B^\dagger B
=\begin{pmatrix}2&\overline\pi\\ \pi&2\end{pmatrix}
\]
has entries in \(\pi\mathbb Z[i]\). Therefore
\[
(B^{\otimes r})^\dagger B^{\otimes r}
=(B^\dagger B)^{\otimes r}
\]
has entries in \(\pi^r\mathbb Z[i]\). Since \(v\) and \(w\) are Gaussian-integer linear combinations of the columns of \(B^{\otimes r}\), it follows that
\(\langle v,w\rangle\in\pi^r\mathbb Z[i]\).
Using \(\pi^2=2i\), we obtain
\[
\pi\langle v,w\rangle
\in\pi^{r+1}\mathbb Z[i]
\subseteq2^{\lfloor(r+1)/2\rfloor}\mathbb Z[i],
\]
and hence
\[
2\operatorname{Re}\!\bigl(\pi\langle v,w\rangle\bigr)
\in2^{\lfloor(r+1)/2\rfloor+1}\mathbb Z.
\]

For the second term, Lemma~\ref{lem:Ln-norm-divisibility} gives
\[
2\|w\|^2
\in2^{\lceil(r+1)/2\rceil+1}\mathbb Z
\subseteq2^{\lfloor(r+1)/2\rfloor+1}\mathbb Z.
\]
Thus both terms in the norm expansion are divisible by
\(2^{\lfloor(r+1)/2\rfloor+1}\), proving the lemma.
\end{proof}

For \(m\ge2\), put
\begin{equation}\label{eq:gamma-definition}
\gamma_m:=2^{\lceil m/2\rceil}.
\end{equation}
Let \(x\in L_{m-1}\) and \(R\in\mathscr E_{m-1}\setminus\{I\}\).
By Lemma~\ref{lem:recursive-block-norms}, there exist \(u_R,v_R\in L_{m-2}\) satisfying
\[
u_R-v_R\in\pi L_{m-2},
\qquad
q_R(x)=\|u_R\|^2-\|v_R\|^2.
\]
Applying Lemma~\ref{lem:Ln-class-congruence} with \(r=m-2\), we obtain
\begin{equation}\label{eq:imbalance-divisibility}
q_R(x)\in
2^{\lfloor(m-1)/2\rfloor+1}\mathbb Z
=\gamma_m\mathbb Z,
\end{equation}
where we used \(\lfloor(m-1)/2\rfloor+1=\lceil m/2\rceil\).

Consequently, for every \(R\in\mathscr E_{m-1}\setminus\{I\}\),
\[
q_R(x)\ne0
\quad\Longrightarrow\quad
|q_R(x)|\ge\gamma_m
\quad\Longrightarrow\quad
q_R(x)^2\ge\gamma_m^2.
\]
We will compare this lower bound with the average of \(q_R(x)^2\) over all nonidentity \(R\).

\subsection{The Matrix Parseval Identity}

The required square-sum identity of \(q_R(x)\) is an instance of Parseval's identity in \(M_{2^n}(\mathbb C)\).
We obtain it by expanding \(xx^\dagger\) in the orthogonal basis of Pauli tensor products with respect to the Hilbert--Schmidt inner product.

For \(n\ge0\), equip \(M_{2^n}(\mathbb C)\) with the Hilbert--Schmidt inner product
\[
\langle A,B\rangle_{\mathrm{HS}}
:=\operatorname{Tr}(A^\dagger B).
\]
The one-factor Pauli matrices satisfy
\[
\operatorname{Tr}(P^\dagger Q)=2\delta_{P,Q}
\qquad(P,Q\in\{I,X,Y,Z\}).
\]
By multiplicativity of the trace on tensor products, it follows that
\begin{equation}\label{eq:Pauli-trace-orthogonality}
\operatorname{Tr}(R^\dagger S)=2^n\delta_{R,S}
\qquad(R,S\in\mathscr E_n).
\end{equation}
Since \(|\mathscr E_n|=4^n=\dim_{\mathbb C}M_{2^n}(\mathbb C)\), the matrices
\[
\{2^{-n/2}R:R\in\mathscr E_n\}
\]
form an orthonormal basis of \(M_{2^n}(\mathbb C)\).

Let \(x\in L_n\) and put \(M:=xx^\dagger\). Since \(R^\dagger=R\), we have
\[
\langle R,M\rangle_{\mathrm{HS}}
=\operatorname{Tr}(Rxx^\dagger)
=x^\dagger Rx
=q_R(x).
\]
Thus the coefficient of \(2^{-n/2}R\) in the orthonormal expansion of \(M\) is \(2^{-n/2}q_R(x)\), and consequently
\begin{equation}\label{eq:Pauli-expansion}
M=\frac1{2^n}\sum_{R\in\mathscr E_n}q_R(x)R.
\end{equation}

Applying Parseval's identity to \(M\) and using \(q_R(x)\in\mathbb R\), we obtain
\[
\frac1{2^n}\sum_{R\in\mathscr E_n}q_R(x)^2
=\langle M,M\rangle_{\mathrm{HS}}.
\]
On the other hand,
\[
\begin{aligned}
\langle M,M\rangle_{\mathrm{HS}}
&=\operatorname{Tr}(xx^\dagger xx^\dagger)\\
&=(x^\dagger x)\operatorname{Tr}(xx^\dagger)\\
&=(x^\dagger x)^2
=\|x\|^4.
\end{aligned}
\]
Consequently,
\begin{equation}\label{eq:Parseval-total}
\sum_{R\in\mathscr E_n}q_R(x)^2
=2^n\|x\|^4.
\end{equation}

For \(n\ge1\), subtracting the identity term \(q_I(x)^2=\|x\|^4\) gives
\[
\sum_{R\in\mathscr E_n\setminus\{I\}}q_R(x)^2
=(2^n-1)\|x\|^4.
\]
Since \(\mathscr E_n\setminus\{I\}\) has \(4^n-1=(2^n-1)(2^n+1)\) elements, we obtain the exact average
\begin{equation}\label{eq:Parseval-average}
\frac1{4^n-1}
\sum_{R\in\mathscr E_n\setminus\{I\}}q_R(x)^2
=\frac{\|x\|^4}{2^n+1}.
\end{equation}

\section{Construction of the Candidate Norms}
\label{sec:attainment}

We now prove that every norm listed in Theorem~\ref{thm:main-spectrum} is attained.
Using the affine-subspace definition \eqref{eq:Lambda-affine}, we construct each candidate norm as the squared norm of a sum of two minimal vectors, with their inner product determined by the intersection of their supports.

Fix integers \(m\ge4\) and \(j\) satisfying
\[
\left\lceil\frac m2\right\rceil\le j\le m-2,
\]
and put \(a:=m-j\). Then
\[
2\le a\le\left\lfloor\frac m2\right\rfloor,
\]
so in particular \(2a\le m\).

In \(V_m=\mathbb F_2^m\), choose the coordinate subspaces
\[
U:=\{t\in V_m:t_1=0\},
\qquad
V:=\{t\in V_m:t_2=\cdots=t_{2a}=0\}.
\]
Their codimensions are \(1\) and \(2a-1\), respectively.
Since their defining equations involve disjoint sets of coordinates, we have
\begin{equation}\label{eq:construction-intersection-dimension}
\dim(U\cap V)=m-2a=2j-m.
\end{equation}

Set
\[
p:=x_U,
\qquad
q:=-2^{a-1}x_V.
\]
Since
\[
\left\lfloor\frac{\operatorname{codim}U}{2}\right\rfloor=0,
\qquad
\left\lfloor\frac{\operatorname{codim}V}{2}\right\rfloor=a-1,
\]
it follows from \eqref{eq:Lambda-affine} that \(p,q\in\Lambda_m\). Their squared norms are
\[
\|p\|^2=|U|=2^{m-1}=d_m,
\qquad
\|q\|^2=2^{2a-2}|V|
=2^{2a-2}2^{m-2a+1}=d_m.
\]
Thus both \(p\) and \(q\) are minimal vectors of \(\Lambda_m\).

Using \((x_U,x_V)=|U\cap V|\) and \(\dim(U\cap V)=m-2a\), we obtain
\begin{equation}\label{eq:construction-inner-product}
\begin{aligned}
(p,q)
&=-2^{a-1}|U\cap V|\\
&=-2^{a-1}2^{m-2a}\\
&=-2^{m-a-1}
=-2^{j-1},
\end{aligned}
\end{equation}
where the last equality uses \(a=m-j\).

Since \(p,q\in\Lambda_m\), their sum belongs to \(\Lambda_m\), and
\begin{equation}\label{eq:attained-norm}
\begin{aligned}
\|p+q\|^2
&=\|p\|^2+\|q\|^2+2(p,q)\\
&=2d_m-2^j.
\end{aligned}
\end{equation}
As \(j\) was arbitrary in the range \(\lceil m/2\rceil\le j\le m-2\), this realizes every candidate norm in Theorem~\ref{thm:main-spectrum}.

\section{Exclusion of All Other Norms}
\label{sec:exclusion}

\begin{proof}[Proof of Theorem~\ref{thm:main-spectrum}]

Recall that \(d_m=2^{m-1}\) and \(\gamma_m=2^{\lceil m/2\rceil}\).
Section~\ref{sec:attainment} establishes that every listed norm is attained.
It remains to show that any \(x\in\Lambda_m\) satisfying
\[
d_m<\|x\|^2<2d_m
\]
has squared norm \(2d_m-2^j\) for some integer \(j\) with \(\lceil m/2\rceil\le j\le m-2\).
We prove this assertion by induction on \(m\).

For \(m=1\), we have \(\Lambda_1=\mathbb Z^2\) and \(d_1=1\).
Every squared norm in \(\Lambda_1\) is an integer, so no vector has squared norm strictly between \(d_1\) and \(2d_1\).

For \(m=2,3\), equation~\eqref{eq:Lambda-norm-divisibility} gives
\[
\|y\|^2\in d_m\mathbb Z
\qquad(y\in\Lambda_m),
\]
since \(2^{\lceil m/2\rceil}=2^{m-1}=d_m\) in these two cases.
Thus no squared norm lies strictly between \(d_m\) and \(2d_m\), completing the initial cases.

By Lemma~\ref{lem:Gaussian-realization}, the isometries \(\rho_{m-1}:L_{m-1}\to\Lambda_m\), \(m\ge1\), allow us to carry out the induction in the Gaussian models.
Let \(m\ge4\), and assume that the asserted norm restriction holds for \(L_{m-2}\).
Take \(x\in L_{m-1}\) satisfying
\[
d_m<\|x\|^2<2d_m.
\]
For this fixed \(x\), write \(q_R:=q_R(x)\) for \(R\in\mathscr E_{m-1}\setminus\{I\}\).
We distinguish two cases according to the parity of \(m\).

\medskip\noindent\textit{Case 1: odd \(m\).}

Assume first that \(m\) is odd.
Then \(\gamma_m=2^{(m+1)/2}\), so \(\gamma_m^2=4d_m\).
Applying \eqref{eq:Parseval-average} with \(n=m-1\), we obtain
\[
\frac{1}{d_m^2-1}
\sum_{R\in\mathscr E_{m-1}\setminus\{I\}}q_R^2
=\frac{\|x\|^4}{d_m+1}
<\frac{4d_m^2}{d_m+1}
<4d_m
=\gamma_m^2.
\]

By \eqref{eq:imbalance-divisibility}, every nonzero \(q_R\) has square at least \(\gamma_m^2\).
The preceding strict inequality therefore implies that \(q_R=0\) for some \(R\in\mathscr E_{m-1}\setminus\{I\}\).
Lemma~\ref{lem:recursive-block-norms} then gives \(u_R,v_R\in L_{m-2}\) with
\[
\|u_R\|^2=\|v_R\|^2=\frac{\|x\|^2}{2}.
\]
In particular,
\[
d_{m-1}=\frac{d_m}{2}
<\|u_R\|^2
<d_m=2d_{m-1}.
\]

Applying the induction hypothesis to \(u_R\), we obtain
\[
\|u_R\|^2=2d_{m-1}-2^k=d_m-2^k
\]
for some integer \(k\) satisfying
\[
\frac{m-1}{2}\le k\le m-3.
\]
Consequently,
\[
\|x\|^2=2\|u_R\|^2=2d_m-2^{k+1},
\qquad
\left\lceil\frac m2\right\rceil
=\frac{m+1}{2}\le k+1\le m-2,
\]
as required.

\medskip\noindent\textit{Case 2: even \(m\).}

Assume now that \(m\) is even.
Then \(\gamma_m=2^{m/2}\) and \(\gamma_m^2=2d_m\).

We first record a consequence of the induction hypothesis for the norms of vectors in the lower-dimensional lattice.
For every \(y\in L_{m-2}\) with \(\|y\|^2<d_m\), the quantity
\[
\frac{d_m-\|y\|^2}{\gamma_m}
\]
is a power of two with nonnegative integer exponent.
For \(y=0\), this quantity equals \(2^{m/2-1}\); for \(\|y\|^2=d_m/2\), it equals \(2^{m/2-2}\).
Both exponents are nonnegative since \(m\ge4\).
In all remaining cases, \(\min(L_{m-2})=d_m/2\) gives
\[
d_m/2<\|y\|^2<d_m.
\]
The induction hypothesis therefore yields \(\|y\|^2=d_m-2^k\) for some integer \(k\) with \(m/2\le k\le m-3\), and hence
\[
\frac{d_m-\|y\|^2}{\gamma_m}=2^{k-m/2},
\]
which proves the assertion.

We now apply this observation to the coordinate subvectors obtained from \(x\) by the lattice automorphisms.
By Lemma~\ref{lem:Ln-norm-divisibility}, \(\|x\|^2\in\gamma_m\mathbb Z\), so we may write
\[
\|x\|^2=2d_m-\gamma_m T
\]
for some integer \(T\).
The inequalities \(d_m<\|x\|^2<2d_m\) give
\[
1\le T<\frac{d_m}{\gamma_m}=\frac{\gamma_m}{2}.
\]

For each \(R\in\mathscr E_{m-1}\setminus\{I\}\), set
\[
z_R:=\frac{q_R}{\gamma_m}\in\mathbb Z,
\]
where integrality follows from \eqref{eq:imbalance-divisibility}.

For each \(R\in\mathscr E_{m-1}\setminus\{I\}\), Lemma~\ref{lem:recursive-block-norms} gives coordinate subvectors \(u_R,v_R\in L_{m-2}\) whose smaller and larger squared norms are, respectively,
\[
\frac{\|x\|^2-|q_R|}{2}
=d_m-\gamma_m\frac{T+|z_R|}{2},
\qquad
\frac{\|x\|^2+|q_R|}{2}
=d_m-\gamma_m\frac{T-|z_R|}{2}.
\]
The smaller squared norm is always less than \(d_m\), while the larger one is less than \(d_m\) precisely when \(|z_R|<T\).
Applying the preceding observation to those coordinate subvectors whose squared norms are less than \(d_m\), we conclude that
\[
\frac{T+|z_R|}{2}
\quad\text{and, whenever }|z_R|<T,\quad
\frac{T-|z_R|}{2}
\]
are powers of two with nonnegative integer exponents.

There are \(4^{m-1}-1=d_m^2-1\) such indices.
Applying \eqref{eq:Parseval-average} with \(n=m-1\) and using \(\|x\|^2=\gamma_m(\gamma_m-T)\), we obtain
\begin{equation}\label{eq:z-average}
\frac1{d_m^2-1}\sum_{R\in\mathscr E_{m-1}\setminus\{I\}}z_R^2
=\frac{(\gamma_m-T)^2}{d_m+1}
<\frac{2d_m}{d_m+1}
<2,
\end{equation}
where the strict inequalities follow from \(0<T<\gamma_m\) and \(\gamma_m^2=2d_m\).
Hence there exists \(R_0\in\mathscr E_{m-1}\setminus\{I\}\) such that \(z_{R_0}^2<2\).
As \(z_{R_0}\) is an integer, it follows that
\[
|z_{R_0}|\in\{0,1\}.
\]
If \(z_{R_0}=0\), the condition on \((T+|z_{R_0}|)/2\) shows that \(T/2\) is a power of two with nonnegative integer exponent, and therefore \(T\) is also a power of two.

Suppose now that \(|z_{R_0}|=1\).
If \(T=1\), then \(T\) is already a power of two.
If \(T>1\), then \(|z_{R_0}|=1<T\), so both conditions apply and
\[
\frac{T-1}{2}
\quad\text{and}\quad
\frac{T+1}{2}
\]
are powers of two with nonnegative integer exponents.
Since they differ by \(1\), they must equal \(1\) and \(2\), respectively, giving \(T=3\).
Thus, in either branch, \(T\) is a power of two or \(T=3\).

It remains to exclude \(T=3\).
Suppose that this value occurs.
For every \(R\in\mathscr E_{m-1}\setminus\{I\}\), the condition on \((T+|z_R|)/2\) implies that \(3+|z_R|\) is a power of two.
Thus \(3\nmid z_R\), and hence \(z_R^2\equiv1\pmod3\).
Since \(m\) is even, \(d_m=2^{m-1}\equiv-1\pmod3\), so
\[
\sum_{R\in\mathscr E_{m-1}\setminus\{I\}}z_R^2
\equiv d_m^2-1
\equiv0\pmod3.
\]
On the other hand, the equality in \eqref{eq:z-average} gives
\[
\sum_{R\in\mathscr E_{m-1}\setminus\{I\}}z_R^2
=(d_m-1)(\gamma_m-3)^2
\equiv(d_m-1)\gamma_m^2
\equiv1\pmod3,
\]
where we used \(\gamma_m^2=2d_m\).
This contradiction rules out \(T=3\).

Consequently, \(T=2^a\) for some nonnegative integer \(a\).
Since
\[
2^a=T<\frac{\gamma_m}{2}=2^{m/2-1},
\]
we have \(0\le a\le m/2-2\).
Substituting into the expression for \(\|x\|^2\), we obtain
\[
\|x\|^2=2d_m-\gamma_m T=2d_m-2^{m/2+a},
\]
where
\[
\left\lceil\frac m2\right\rceil
=\frac m2\le\frac m2+a\le m-2.
\]
This proves the assertion in the even case.

The two cases discussed above complete the induction.
Together with the constructions in Section~\ref{sec:attainment}, this proves Theorem~\ref{thm:main-spectrum}.

\end{proof}

\section{Conclusion}
\label{sec:conclusion}

We have determined the complete norm spectrum of the Barnes--Wall lattices strictly between the minimum and twice its value.
To conclude, the proof of Theorem~\ref{thm:main-spectrum} develops the recursive approach of Kliuchnikov and Sch\"onnenbeck beyond the minimal vectors.
Lattice automorphisms arising from Clifford unitaries transform a vector into
coordinate subvectors in a lower-dimensional lattice, while 2-divisibility and a matrix Parseval identity impose
arithmetic and averaging constraints on their norms.
Together with induction on the dimension, these constraints
exclude all other norms in the stated interval.

Further work could investigate whether the recursive methods used here can constrain the norm spectrum at or above twice the minimum.
Another question is whether these methods, together with the explicit constructions, can be adapted to describe the orbits of vectors of each admissible norm under lattice automorphisms.

\section{Declaration of Generative AI and AI-Assisted Technologies in the Manuscript Preparation Process}
During the preparation of this work, the authors used GPT-5.6 to assist with proof development and GPT-6 to assist with manuscript preparation under the authors’ direction. After using these tools, the authors reviewed and edited the content as needed and take full responsibility for the content of the article.

\end{document}